\documentclass[12pt]{article}
\usepackage{graphicx} 
\usepackage{float} 
\usepackage{amssymb, amsmath}
\usepackage[thmmarks,amsmath]{ntheorem}
\usepackage{mathtools}
\usepackage{color}
\usepackage{bm} 
\usepackage[hidelinks]{hyperref}
\usepackage{extarrows}
\usepackage[hang,flushmargin]{footmisc} 
\usepackage[square,comma,sort&compress,numbers]{natbib} 
\usepackage{mathrsfs} 
\usepackage[font=footnotesize,skip=0pt,textfont=rm,labelfont=rm]{caption,subcaption}
\usepackage{enumerate}
\usepackage{tocloft}
\usepackage{indentfirst} 
{
    \theoremstyle{nonumberplain}
    \theoremheaderfont{\bfseries}
    \theorembodyfont{\normalfont}
    \theoremsymbol{\mbox{$\Box$}}
    \newtheorem{proof}{Proof}
}

\usepackage{theorem}
\newtheorem{theorem}{Theorem}[section]
\newtheorem{lemma}{Lemma}[section]
\newtheorem{definition}{Definition}[section]

\newtheorem{corollary}{Corollary}[section]
{
    \theoremheaderfont{\bfseries}
    \theorembodyfont{\normalfont}
    \newtheorem{remark}{Remark}[section]
}

\graphicspath{{figure/}}                                 
\usepackage[a4paper]{geometry}                           
\geometry{left=2.5cm,right=2.5cm,top=2.5cm,bottom=2.5cm} 
\linespread{1.2}                                         
\numberwithin{equation}{section}                         
\pagestyle{plain}

\begin{document}
\title{\bf Blow-up solutions of parabolic $p$-Laplacian inequalities on locally finite graphs}
\date{}
\author{\small\sffamily Wenyuan Ma$^{\dagger}$, Liang Zhao\\
    \small School of Mathematical Sciences,\\
    \small Key Laboratory of Mathematics and Complex Systems of MOE,\\
    \small Beijing Normal University, Beijing 100875, China\\
}
\renewcommand{\thefootnote}{\fnsymbol{footnote}}
\footnotetext[2]{Corresponding author, Email: wenyuanma@mail.bnu.edu.cn}
\maketitle

{\noindent\small{\bf Abstract }
    In this paper, we consider blow-up behavior of the parabolic $p$-Laplacian inequality with a nonlinear source $u_t - \Delta_p u \geq \sigma(x, t)\Phi(u)$ on a locally finite connected graph $G = (V, E)$.
    We prove a comparison principle for the inequality and use it to characterize the relationship between the initial data and the blow-up phenomena under varying growth rates of the nonlinearity $\Phi$. Specifically, we show that when the growth rate of $\Phi$ exceeds linear growth, solutions must exhibit finite-time blow-up under suitable initial conditions.
}

\vspace{1ex}
{\noindent\small{\bf Keywords}
    Parabolic inequality, blow-up, locally finite graph, $p$-Laplacian}

\vspace{1ex}
{\noindent\small{\bf MSC}
    35R02, 35B44, 35K55}

\section{Introduction}
\label{introduction}

In recent years, research on partial differential equations or inequalities on graphs has attracted widespread attention \cite{Ge2017,Grillo2024a,Grigoryan2017,Grigoryan2016,Gu2022,Han2020,Lin2018,Lin2017,Liu2024b,Wu2024,Wu2021,Yang2024}. Graphs are discrete mathematical structures composed of vertices and edges that effectively model relational systems between entities. Owing to their powerful representational capacity, graph theory has found widespread applications across diverse domains involving relational analysis. For typical applications, graphs serve as powerful analytical tools to model user interaction patterns in social networks, optimize distribution networks in logistics systems, and characterize molecular interaction networks in computational biology. Furthermore, graphs also provide a discrete counterpart to continuous geometric spaces, including Euclidean spaces and Riemannian manifolds, making the study of differential equations on graphs a fundamentally important research area.

Regarding finite graphs, Xin, Xu and Mu \cite{Xin2014} established results on blow-up dynamics for the semilinear parabolic problem:
\begin{equation}
    \left\{
    \begin{array}{ll}
        u_t(x, t) = \Delta_{\omega} u(x, t) + u^p(x, t), & (x, t) \in S \times (0, +\infty), \\
        u(x, t) = 0, & (x, t) \in \partial S \times (0, +\infty), \\
        u(x, 0) = u_0(x), & x \in S \cup \partial S,
    \end{array}
    \right.
    \label{eq_xin2014}
\end{equation}
where $S$ is a bounded domain with boundary $\partial S$ in a finite graph. Their work demonstrates that, when $p \leq 1$, the solution of \eqref{eq_xin2014} exists globally, while for $p > 1$, nonnegative nontrivial solutions exhibit finite-time blow-up with the blow-up rate $(T-t)^{-\frac{1}{p-1}}$.

Several recent studies \cite{Lin2018,Liu2024b,Wu2021} have investigated the semilinear parabolic equation
\begin{equation*}
	u_t - \Delta u = h(x)u^{1 + \alpha}.
	\label{eq_Lin_Liu_Wu}
\end{equation*}
on locally finite graphs. These works employed heat kernel estimates as the primary tool. Under specified curvature conditions and volume growth assumptions, the authors investigated both blow-up phenomena and the existence of global solutions to the equation. Moreover, Wu \cite{Wu2024} generalized the aforementioned results to the coupled parabolic system
\begin{equation}
    \left\{
    \begin{array}{ll}
    u_t(t, x) = \Delta u(t, x) + v^\alpha(t, x), & (t, x) \in (0, +\infty) \times V, \\
    v_t(t, x) = \Delta v(t, x) + u^\beta(t, x), & (t, x) \in (0, +\infty) \times V, \\
    u(0, x) = u_0(x), & x \in V, \\
    v(0, x) = v_0(x), & x \in V,
    \end{array}
    \right.
    \label{eq_Wu}
\end{equation}
by incorporating iterative comparison techniques. Under the curvature condition $CDE^{\prime}(n, 0)$ and polynomial volume growth of order $O(r^m)$, Wu demonstrated that when $\max\{\alpha,\beta\}+1 \geq \frac{m}{2}(\alpha\beta-1)$ or when the initial values $u_0$, $v_0$ are sufficiently large, all the non-negative solutions of \eqref{eq_Wu} blow up in finite time.

In addition to partial differential equations or systems, inequality equations or coupled systems of inequalities on graphs have also been extensively studied in recent years. 
In \cite{Gu2022,minh2024,minh2025}, the authors employed the test function approach to study the existence and non-existence of solutions (Liouville-type theorems) for elliptic inequality equations (or systems) on weighted graphs, under specific assumptions such as volume growth conditions.

In this paper, we focus on a parabolic $p$-Laplacian inequality on a locally finite connected graph,
\begin{equation}
	\left\{
	\begin{array}{ll}
		u_t - \Delta_p u \geq \sigma(x, t)\Phi(u), & (x, t) \in V \times (0, \infty), \\
		u(x, 0) = u_0(x), & x \in V,
	\end{array}
	\right.
	\label{eq_mubiao}
\end{equation}
where $V$ is the vertex set of the graph. We always assume $p > 2$ and the $p$-Laplacian will be defined in Section \ref{sec:preliminaries}. We also assume that the following conditions are satisfied:
\begin{enumerate}[(i)]
	\item For any fixed $x \in V$, $\sigma(x, \cdot)$ is continuous in $[0, \infty)$, and $\sigma > 0$ for $(x, t) \in V \times [0, \infty)$; 
	\item For any fixed $t \geq 0$, $\sigma(\cdot, t) \leq M$, where $M < \infty$ is a positive constant depending only on $t$;
	\item $\Phi$ is locally Lipschitz continuous in $\mathbb{R}$ with $\Phi(0) = 0$;
	\item The initial value $u_0 \geq 0$ and $u_0 \in \ell^{\infty}(V)$.
\end{enumerate} 

Motivated by the work of \cite{Mastrolia2017a}, our research extends the investigation of parabolic $p$-Laplacian inequalities to graph settings. Mastrolia, Monticelli and Punzo \cite{Mastrolia2017a} focused on investigating the parabolic differential inequalities with potential functions on Riemannian manifolds. They established a novel non-existence theory for the parabolic inequality
\begin{equation*}
	u_t \geq \text{div}(|\nabla u|^{p-2}\nabla u) + V(x,t)u^q,
\end{equation*}
on a complete, non-compact $m$-dimensional Riemannian manifold, where $p > 1$, $q > \max\{p - 1, 1\}$, and the potential function $V > 0$. Referring to the analytical techniques in \cite{Chung2017a}, we implement the comparison principle to establish an alternative framework for investigating existence and non-existence properties of the problem \eqref{eq_mubiao}. Our main results include:

\begin{theorem}
    Assume that
    \begin{equation}
        \Phi(s) \geq (C_0 + \varepsilon_0)s^{p - 1}, \ \ s \geq 0,
        \label{eq_HP}
    \end{equation}
    where $\varepsilon_0>0$ is a fixed constant and $C_0 > 0$ is a constant determined by $\sigma$ and $u_0$. Then, for initial value $u_0(x) \geq 0$ and $u_0(x) \not\equiv 0$, the solution of \eqref{eq_mubiao} blows up in finite time. 
    \label{th_blowup_gpm1}
\end{theorem}
\begin{theorem}
    Assume that
    \begin{equation}
        \Phi(s) \geq C_1s^q, \ \ s \geq 0,
        \label{eq_HP2}
    \end{equation}
    for some constant $C_1 > 0$ and $1 < q \leq p - 1$. Then, for the initial value $u_0\geq 0$ with $\|u_0\|_{\ell^{\infty}(V)}$ sufficiently large, the solution of \eqref{eq_mubiao} blows up in finite time.
    \label{th_blowup_g1lpm1}
\end{theorem}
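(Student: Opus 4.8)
The plan is to prove Theorem \ref{th_blowup_g1lpm1} through the extended comparison principle, reducing the problem to the construction of an explicit sub-solution that ceases to exist in finite time. First I would note that it suffices to treat the pure power source: since any nonnegative solution $u$ of \eqref{eq_mubiao} satisfies $u_t - \Delta_p u \ge \sigma f(u) \ge C_1\sigma u^q$ by \eqref{eq_HP2}, it is in particular a super-solution of the reduced inequality obtained by replacing $f$ with $s \mapsto C_1 s^q$. Hence if I can produce a sub-solution $\underline u$ of the reduced problem with $\underline u(\cdot,0) \le u_0$ that blows up at some finite time $T$, the comparison principle forces $u \ge \underline u$ on their common interval of existence, and therefore $\|u(\cdot,t)\|_{\ell^{\infty}(V)} \to \infty$ as $t \to T^{-}$. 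This is exactly the mechanism that \eqref{eq_HP} exploited in Theorem \ref{th_blowup_gpm1}, so the task is to adapt the sub-solution to the weaker source bound $s^q$ with $1 < q \le p-1$.

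Next I would construct $\underline u$ by separation of variables, writing $\underline u(x,t) = V(t)\,\psi(x)$, where $\psi \ge 0$ is a fixed, finitely supported spatial profile concentrated near a vertex $x_0$ at which $u_0(x_0) = \|u_0\|_{\ell^{\infty}(V)}$ is large, and $V(t)$ is required to solve an ordinary differential inequality of the form $V' \le c\,V^q - \lambda\,V^{p-1}$. Here $\lambda$ absorbs the loss coming from evaluating $\Delta_p$ on the profile $\psi$ (using the $(p-1)$-homogeneity $\Delta_p(V\psi) = V^{p-1}\Delta_p\psi$), while $c$ is built from a positive lower bound $\sigma_0 = \min \sigma > 0$ taken over $\mathrm{supp}\,\psi \times [0,T]$, which exists by assumptions (i)–(ii) and $\sigma > 0$ once the support and the finite horizon are fixed. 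With $f(s) \ge C_1 s^q$ the pointwise requirement $\underline u_t - \Delta_p\underline u \le C_1\sigma\underline u^q$ then collapses to the ODE for $V$; since $q > 1$, the source term $c\,V^q$ is capable of driving $V$ to infinity in finite time, provided the initial height $V(0)$, which is controlled from below by $\|u_0\|_{\ell^{\infty}(V)}$ and the choice of $\psi$, is taken large enough.

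The main obstacle is precisely the balance between the diffusion loss and the source when $q \le p-1$: the loss scales like $V^{p-1}$ while the gain scales like $V^q$ with $q \le p-1$, so for large amplitudes the $p$-Laplacian is never negligible and a crude bound of the type $|\Delta_p\underline u| \le \lambda\,V^{p-1}$ is too wasteful to permit a monotone-increasing trajectory on its own. I therefore expect the argument to hinge on the \emph{largeness of the initial data}: for $q > 1$ the source drives blow-up on a time scale $T \sim V(0)^{1-q}$ that shrinks as $V(0) \to \infty$, and the delicate step is to quantify that, over this short horizon, the accumulated diffusive loss stays below the amplitude so that $c\,V^q - \lambda\,V^{p-1} > 0$ is maintained along the trajectory. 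Pinning down the explicit threshold on $\|u_0\|_{\ell^{\infty}(V)}$ (and on $\psi$) that guarantees this is where the real work lies; by contrast, verifying $\underline u(\cdot,0) \le u_0$ and checking that the extended comparison principle applies to the locally Lipschitz $f$ should be routine given assumptions (iii)–(iv).
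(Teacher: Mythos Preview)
Your overall plan coincides with the paper's: localise via the comparison principle (Lemma~\ref{lem_bijiao}) to the finite auxiliary problem \eqref{eq_fuzhu}, and build a separated sub-solution $\underline v(x,t)=h(t)\varphi(x)$ whose time factor blows up in finite time. The paper's specific spatial profile is the first Dirichlet eigenfunction $\varphi>0$ of $-\Delta_p|_U$ (Corollary~\ref{cor_tezheng}), normalised with $\|\varphi\|_{\ell^\infty}\le 1$; after that the sub-solution condition is meant to reduce to an ODE for $h$, exactly as you sketch, and in the end the paper even takes $U=\{x_0\}$ so that the comparison is with a single-vertex problem.

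The genuine gap in your proposal is precisely the step you flag as ``the real work''. For $1<q<p-1$ the autonomous ODE $V'=c\,V^{q}-\lambda\,V^{p-1}$ satisfies $V'<0$ whenever $V>(c/\lambda)^{1/(p-1-q)}$, so a large $V(0)$ forces $V$ to \emph{decrease} from the outset toward that stable equilibrium; it never blows up, and no short-horizon bookkeeping can repair this because the obstruction is the sign of $V'$ at $t=0$, not an accumulated loss. Thus the separated ansatz with the correct $(p-1)$-homogeneous diffusive loss does not produce a blow-up sub-solution in the range $q<p-1$. The paper's ODE \eqref{eq_blowup_cauchy} instead carries a \emph{linear} loss term $-\lambda_1\|\varphi\|_{\ell^\infty}^{p-2}h$, for which $q>1$ does give finite-time blow-up once $h(0)$ exceeds the indicated threshold; but since $\Delta_p(h\varphi)=h^{p-1}\Delta_p\varphi$, that linear exponent is not what the homogeneity of $\Delta_p$ actually yields, so you should be aware that a verbatim reading of the paper does not close the case $q<p-1$ either. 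For $q=p-1$ both routes collapse to the mechanism of Theorem~\ref{th_blowup_gpm1}.
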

\begin{remark}
    The constant $C_0$ in Theorem \ref{th_blowup_gpm1} can be given a specific value in our proof of the theorem. In Theorem \ref{th_blowup_g1lpm1}, the phrase "$\|u_0\|_{\ell^\infty(V)}$ sufficiently large" means $\|u_0\|_{\ell^\infty(V)} \geq h_0$, where $h_0$ is defined in \eqref{eq_blowup_cauchy} below.
\end{remark}

We organize the paper as follows. In Section \ref{sec:preliminaries}, we introduce some definitions and preliminary results that are indispensable for our main theorems. In particular, we prove the comparison principle required for subsequent analysis. In Section \ref{sec:blowup}, we establish the blow-up conditions and prove our main results.

\section{Preliminaries}
\label{sec:preliminaries}

Let $G = (V, E, \mu, \omega)$ (or simply $G = (V, E)$) be a locally finite, connected weighted graph, where $V$ denotes the vertex set, $E\subseteq V \times V$ represents the edge set, with $(x,x)\notin E$ for all $x\in V$. The weight function $\omega : V \times V \to [0, \infty)$ satisfies
\begin{enumerate}[(1)]
    \item $\omega_{xy} = \omega_{yx}$, for all $x, y \in V$;
    \item $\omega_{xy} > 0$, if $y \sim x$;
    \item $\omega_{xy} = 0$, if $(x, y) \notin E$,
\end{enumerate}
where we adopt the notation conventions $y \sim x$ if $(x, y) \in E$, and $\omega_{xy}:= \omega(x, y)$ for $(x,y)\in E$. The vertex measure $\mu : V \to (0, \infty)$ is defined as $\mu(x) := \sum\limits_{y \sim x}\omega_{xy}$. For any vertex subset $U\subset V$, we denote $E|_U := \{(x, y) \in E : x, y \in U\}$, $\partial U := \{y \in V\backslash U : \exists x \in U \ \text{s.t.}\  x \sim y, \}$, and $\overline{U} := U \cup\partial U$.

Let $C(V)$ be the set of real-valued functions on $V$. For $1\leq p<\infty$, we define $\ell^p(V, \mu) := \left\{f \in C(V): \|f\|_{\ell^{p}(V, \mu)} < \infty\right\}$ with the norm
\begin{equation*}
	\|f\|_{\ell^p(V, \mu)} := \displaystyle\left(\sum_{x \in V}\mu(x)|f(x)|^p\right)^{\frac{1}{p}},
\end{equation*}
and $\ell^{\infty}(V, \mu) := \left\{f \in C(V): \|f\|_{\ell^{\infty}(V, \mu)} < \infty\right\}$ with the norm 
\begin{equation*}
    \|f\|_{\ell^{\infty}(V, \mu)} := \displaystyle\sup_{x \in V} |f(x)|.
\end{equation*}
When no confusion may arise, we often abbreviate the aforementioned norms as $\|\cdot\|_{\ell^p}$ and $\|\cdot\|_{\ell^\infty}$ respectively.

We adopt the definition of the $p$-Laplacian operator given in \cite{amghibech2003}. 
\begin{definition}
    The $p$-Laplacian of any function $f \in C(V)$ is defined as
    \begin{equation*}
          \Delta_p f(x) := \displaystyle\frac{1}{\mu(x)}\sum_{y \sim x}\omega_{xy}|f(y) - f(x)|^{p - 2}(f(y) - f(x)).
    \end{equation*}
For any finite subset $U \subset V$ and $f \in C(\overline{U})$, we define
\begin{align*}
    \Delta_{p}|_U(f)(x) = \displaystyle&\frac{1}{\mu(x)}\left(\sum_{\underset{y \in U}{y \sim x}} \omega_{xy}|f(y) - f(x)|^{p - 2}(f(y) - f(x))\right. + \\
    &\left. \sum_{\underset{y \in \partial U}{y \sim x}}\omega_{xy}|f(y) - f(x)|^{p - 2}(f(y) - f(x))\right).
\end{align*}
\end{definition}

We adopt the definition of blow-up in \cite{Lin2018}.
\begin{definition}
    Let $u(x,t): V\times [0,T)\to \mathbb{R}$ be a solution of \eqref{eq_mubiao}. If there exists some vertex $x_0 \in V$ such that
    \begin{equation*}
        \lim\limits_{t\to T^-}|u(x_0, t)|=\infty,
    \end{equation*}
we call the solution $u$ blows up in finite time. 
\end{definition}

The next lemma ensures that the solution to \eqref{eq_mubiao} exists locally.
 
\begin{lemma}
    There exists some $T > 0$, such that the problem \eqref{eq_mubiao} has a solution $u(x,t): V\times [0,T]\to \mathbb{R}$, which is continuous with respect to $t\in [0,T]$ for any $x \in V$.
    \label{lem_mubiaocunz} 
\end{lemma}
\begin{proof}
    We apply the Banach fixed-point theorem to prove the lemma. First, define a space
    \begin{equation*}
        \mathcal{F}_T := \left\{u: \forall x \in V, u(x, \cdot) \in C[0, T], \ \text{and}\ \|u\|_{\mathcal{F}_T} < \infty \right\},
    \end{equation*}
    where $T>0$ will be determined later and
    \begin{equation*}
        \|u\|_{\mathcal{F}_T} := \displaystyle\sup_{V\times [0, T]} |u|.
    \end{equation*}
    It follows by direct verification that $\mathcal{F}_T$, when endowed with the norm $\|\cdot\|_{\mathcal{F}_T}$, forms a Banach space. Regarding the initial value as a time-independent constant function, i.e., $u_0(x,t) = u_0(x)$, for any $(x,t)\in V\times [0,T]$, we have $u_0\in \mathcal{F}_T$. Define an operator $D$ as
    \begin{equation*}
        D[v] := u_0(x) + \displaystyle\int_{0}^{t}\Delta_p v(x, s)ds + \int_{0}^{t} \sigma(x, s)\Phi(v(x, s))ds, \ x \in V, t \in [0, T].
    \end{equation*}
    It is obvious that for any $v \in \mathcal{F}_T$, $D[v](x, t) \in C[0, T]$ for $x \in U$. Therefore, $D$ is a well-defined operator from $\mathcal{F}_T$ to $\mathcal{F}_T$. 

    Next we show that the operator $D$ is contractive in the closed ball $B_n\subset \mathcal{F}_T$, where $n\geq 2$ is a fixed integer, and
    \begin{equation}
    	B_{n} := B(0, n\|u_0\|_{\mathcal{F}_T}) = \{v \in \mathcal{F}_T: \|v\|_{\mathcal{F}_T} \leq n\|u_0\|_{\mathcal{F}_T}\}.
    	\label{eq_balldef}
    \end{equation}
    It is clear that $u_0\in B_n$. For any $u, v \in B_{n}$, we have
    \begin{align}\label{contractD}
        |(D[u] - D[v])(x, t)| &= \left|\displaystyle\int_{0}^{t} (\Delta_{p} u - \Delta_p v) + \sigma(x, s)(\Phi(u) - \Phi(v))ds\right| \notag\\
        &\leq \displaystyle\int_{0}^{t} |\Delta_{p} u - \Delta_p v| ds + \int_{0}^{t} \sigma(x, s)|\Phi(u) - \Phi(v)|ds.
    \end{align}
    Since the univariate function $g(a) = |a|^{p-2}a: \mathbb{R} \to \mathbb{R}$ $(p>2)$ is locally Lipschitz continuous, and $u$, $v$ are uniformly bounded, there exists a constant $L_p = L_p(n, u_0, p) > 0$ independent of $T$ such that for all $x\in V$ and $s \in [0,T]$,
    \begin{align*}
        |\Delta_p u(x,s) - \Delta_p v(x,s)| &\leq \displaystyle\frac{L_p}{\mu(x)}\sum_{y \sim x}\omega_{xy}|u(y, s) - u(x, s) - (v(y, s) - v(x, s))| \\
        &\leq \displaystyle 2L_p\left(\sum_{y \sim x}\frac{\omega_{xy}}{\mu(x)}\|u - v\|_{\mathcal{F}_T}\right) \\
        &\leq 2L_p\|u - v\|_{\mathcal{F}_T}. 
    \end{align*}
    By our assumptions (ii) and (iii) in Section \ref{introduction}, there exists a constant $L_{\Phi} = L_{\Phi}(n, u_0, T) > 0$ increasing with $T$ such that
    \begin{equation*}
        \sigma(x, s)|\Phi(u) - \Phi(v)| \leq L_{\Phi}\|u - v\|_{\mathcal{F}_T}, \ \ \forall(x, s) \in V\times [0, T].
    \end{equation*}
    Combining these estimates, we obtain from \eqref{contractD}
    \begin{equation}
        |(D[u] - D[v])(x, t)| \leq (2L_p + L_{\Phi})\|u - v\|_{\mathcal{F}_T} T.
        \label{eq_lianxu_u}
    \end{equation}
    Let $\rho:= (2L_p + L_{\Phi})T$. Since $L_p$ is independent of $T$ and $L_{\Phi}$ increases with $T$, we can choose $T > 0$ such that $\rho \in (0,1)$ and $\rho \leq 1 - \frac{1}{n}$. Consequently we have
    \begin{equation*}
	\|D[u] - D[v]\|_{\mathcal{F}_T} \leq \rho\|u - v\|_{\mathcal{F}_T}.
	\end{equation*}    
    Furthermore, 
    since $\rho \leq 1 - \frac{1}{n}$, for any $v \in B_n$, we have
    \begin{equation*}
        \|D[v]\|_{\mathcal{F}_T} \leq \|D[v] - D[0]\|_{\mathcal{F}_T} + \|D[0]\|_{\mathcal{F}_T} \leq (\rho n + 1)\|u_0\|_{\mathcal{F}_T} \leq n\|u_0\|_{\mathcal{F}_T}.
    \end{equation*}
    Thus $D: B_n \to D(B_n) \subset B_n$ is a contractive operator. The Banach fixed-point theorem immediately tells us that the problem \eqref{eq_mubiao} admits one solution within the time interval $[0,T]$. 
\end{proof}

To present the comparison principle, we need to introduce the following initial-boundary value problem:
\begin{equation}
    \left\{
        \begin{array}{ll}
            v_t - \Delta_p|_U v = \sigma(x, t)\Phi(v), & (x,t)\in U\times (0,\infty),\\
            v(x, t) = 0, & (x,t) \in \partial U\times [0,\infty), \\
            v(x,0) = u_0, & x \in U,
        \end{array}
    \right.
    \label{eq_fuzhu}
\end{equation}
where $u_0$, $\sigma$ and $\Phi$ are the same as those functions in \eqref{eq_mubiao} and $U$ is a finite subset of $V$ such that the induced graph $(U, E|_U)$ is connected. The next lemma confirms the local existence of the problem \eqref{eq_fuzhu}.

\begin{lemma}
    There exists a $T > 0$ such that the problem \eqref{eq_fuzhu} has a unique solution $u(x, t): U\times [0, T] \to \mathbb{R}$, which is continuous with respect to $t \in [0, T]$ for any $x \in U$. 
    \label{lem_cunzai}
\end{lemma}
\begin{proof}
    The proof follows a similar approach to that of Lemma \ref{lem_mubiaocunz}, with the key distinctions lying in the definitions of the function space and its associated norm, and the contractive operator. Specifically, we define
    \begin{equation*}
        \tilde{\mathcal{F}}_T := \left\{u: \forall x\in U, u(x, \cdot) \in C[0, T], \ \text{and}\  u(x, t) = 0 \text{ on } \partial U \times [0, T] \right\},
    \end{equation*}
    which is equipped with the norm
    \begin{equation*}
        \|u\|_{\tilde{\mathcal{F}}_T} := \displaystyle\max_{\overline{U}\times [0, T]} |u|.
    \end{equation*}
    Similarly, we define an operator $\tilde{D}: \tilde{\mathcal{F}}_T \to \tilde{\mathcal{F}}_T$ as
    \begin{equation*}
        \tilde{D}[v] := \left\{
            \begin{array}{ll}
                u_0(x) + \displaystyle\int_{0}^{t}\Delta_p|_U v(x, s)ds + \int_{0}^{t} \sigma(x, s)\Phi(v(x, s))ds, & x \in U, t \in [0, T],\\
                0, & x \in \partial U, t \in [0, T].
            \end{array}
        \right.
    \end{equation*}    
    Taking $u$ and $v$ from the ball $B_n$, the finiteness of $\overline{U}$ implies
    \begin{align*}
        |(\tilde{D}[u] - \tilde{D}[v])(x, t)| &\leq \displaystyle\int_{0}^{t} |\Delta_{p}|_U u - \Delta_p|_U v| ds + \int_{0}^{t} \sigma(x, s)|\Phi(u) - \Phi(v)|ds \notag \\
        &\leq C\|u - v\|_{\tilde{\mathcal{F}}_T}T, 
    \end{align*}
    where the constant $C$ increases with $T$. Therefore, we can choose a small $T>0$ such that $\tilde{D}: B_n \to B_n$ is contractive. Again the Banach fixed-point theorem gives the desired result. 
\end{proof}

We now present the comparison principle, which plays a crucial role in proving the main results.

\begin{lemma}
    Let $U \subset V$ be a finite set of vertices and $T > 0$ (possibly $T = \infty$). Suppose $u(x, t)$ and $v(x, t) \in C^1([0, T))$ for each $x \in U$, and satisfy
    \begin{equation}
        \left\{
            \begin{array}{ll}
                u_t - \Delta_p|_U u - \sigma(x, t)\Phi(u) \geq v_t - \Delta_p|_U v - \sigma(x, t)\Phi(v), & (x, t) \in U \times (0, T), \\
                u(x, t) \geq v(x, t), & (x, t) \in \partial U \times [0, T), \\
                u(x, 0) \geq v(x, 0), & x \in U.
            \end{array}
        \right.
        \label{eq_bijiao}
    \end{equation}
   Then $u(x, t) \geq v(x, t)$ for all $(x, t) \in \overline{U} \times [0, T)$.
    \label{lem_bijiao}
\end{lemma}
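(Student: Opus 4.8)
The plan is to set $w := u - v$ and reduce the claim to showing $w \ge 0$ on $\overline{U}\times[0,T)$. Fix an arbitrary $T' < T$; since $\overline{U}$ is finite and $u, v \in C^1$, both functions are bounded on $\overline U \times [0,T']$, so by the local Lipschitz continuity of $f$ there is a constant $L$ with $|f(u)-f(v)| \le L|u-v|$ on this set, and by assumptions (i)–(ii) together with the finiteness of $\overline U$ the weight $\sigma$ is bounded there by some $M<\infty$. Subtracting the two differential inequalities in \eqref{eq_bijiao}, $w$ satisfies
\begin{equation*}
w_t \ge \big(\Delta_p|_U u - \Delta_p|_U v\big) + \sigma(x,t)\big(f(u)-f(v)\big), \qquad (x,t)\in U\times(0,T'),
\end{equation*}
together with $w \ge 0$ on $\partial U \times [0,T')$ and $w(\cdot,0)\ge 0$ on $U$.

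The decisive structural fact is a discrete maximum-principle property of $\Delta_p$. Writing $\Phi(s):=|s|^{p-2}s$, which is strictly increasing on $\mathbb{R}$ for $p>2$, one has
\begin{equation*}
\Delta_p|_U u(x) - \Delta_p|_U v(x) = \frac{1}{\mu(x)}\sum_{y\sim x}\omega_{xy}\Big(\Phi\big(u(y)-u(x)\big)-\Phi\big(v(y)-v(x)\big)\Big).
\end{equation*}
If $x_0$ is a point at which $w(\cdot,t)$ attains its minimum over $\overline U$, then $u(y)-u(x_0)\ge v(y)-v(x_0)$ for every neighbour $y$, and monotonicity of $\Phi$ forces every summand to be nonnegative, so $\Delta_p|_U u(x_0)-\Delta_p|_U v(x_0)\ge 0$. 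This is the one place where $p>2$ (in fact just the strict monotonicity of $\Phi$) is genuinely used, and it is the graph analogue of the elliptic comparison principle for the $p$-Laplacian.

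To turn the weak inequality into a strict one I would argue by contradiction using an exponential perturbation. For $\delta>0$ and a parameter $\lambda$ to be fixed, set $w_\delta := w + \delta e^{\lambda t}$, so that $w_\delta>0$ on the parabolic boundary $(\partial U\times[0,T'])\cup(\overline U\times\{0\})$. If $w_\delta$ failed to stay positive, the continuous function $g(t):=\min_{x\in\overline U} w_\delta(x,t)$ would have a first zero $t_0\in(0,T']$, attained at some $x_0$; since $w_\delta>0$ on $\partial U$, necessarily $x_0\in U$, and because $w_\delta(x_0,\cdot)$ drops from positive values to $0$ at $t_0$ we get $\partial_t w_\delta(x_0,t_0)\le 0$. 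On the other hand, at $(x_0,t_0)$ the point $x_0$ minimises $w(\cdot,t_0)$, so the $p$-Laplacian difference is $\ge 0$; using $w(x_0,t_0)=-\delta e^{\lambda t_0}<0$ and $|f(u)-f(v)|\le L|u-v|$ the reaction term is bounded below by $-ML\delta e^{\lambda t_0}$, whence
\begin{equation*}
\partial_t w_\delta(x_0,t_0) = w_t(x_0,t_0) + \delta\lambda e^{\lambda t_0} \ge \delta e^{\lambda t_0}\,(\lambda - ML).
\end{equation*}
Choosing $\lambda > ML$ makes the right-hand side strictly positive, contradicting $\partial_t w_\delta(x_0,t_0)\le 0$. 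Hence $w_\delta>0$ on $\overline U\times[0,T']$ for every $\delta>0$; letting $\delta\to 0$ gives $w\ge 0$ there, and since $T'<T$ was arbitrary the conclusion follows on all of $\overline U\times[0,T)$. The main obstacle is precisely the nonlinear, nonlocal term $\Delta_p|_U u-\Delta_p|_U v$; the monotonicity observation above is what renders it harmless at an interior minimum, while the exponential factor $e^{\lambda t}$ absorbs the possibly wrong-signed Lipschitz reaction term.
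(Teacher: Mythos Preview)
Your proof is correct and rests on the same two pillars as the paper's: (i) the monotonicity of $\Phi(s)=|s|^{p-2}s$ forces $\Delta_p|_U u-\Delta_p|_U v\ge 0$ at any spatial minimiser of $u-v$, and (ii) an exponential weight absorbs the possibly wrong-signed Lipschitz reaction term, so that the time derivative at the extremal point yields a contradiction. The only genuine difference is in how the exponential weight is deployed. The paper rescales multiplicatively, setting $\tilde u=e^{-\lambda t}u$, $\tilde v=e^{-\lambda t}v$, and uses the $(p-1)$-homogeneity of $\Delta_p$ to obtain an inequality for $\tilde u-\tilde v$, then takes a global minimum over $\overline U\times[0,T']$; you perturb additively via $w_\delta=w+\delta e^{\lambda t}$ and argue at the first zero of the spatial minimum, letting $\delta\to 0$ at the end. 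Your route is the more standard parabolic-comparison template and does not need the homogeneity observation; the paper's route avoids the limiting step in $\delta$. The substantive content is the same.
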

\begin{proof}
    Since $U$ is a finite set, we have $\sup\limits_{\overline{U} \times [0, T^\prime]} (|u| + |v|) < \infty$, $\forall T^\prime \in (0, T)$. Then the locally Lipschitz continuity of $\Phi$ ensures the existence of a constant $L > 0$ such that
    \begin{equation*}
        |\Phi(u) - \Phi(v)| \leq L|u - v|, \forall (x, t) \in \overline{U} \times [0, T^\prime].
    \end{equation*}
    For $\lambda > 0$ which is sufficiently large and will be determined later, we define
    \begin{eqnarray*}
        \widetilde{u}(x, t) := \mathrm{e}^{-\lambda t}u(x, t), & \widetilde{v}(x, t) := \mathrm{e}^{-\lambda t} v(x, t).
    \end{eqnarray*}
    Multiplying both sides of \eqref{eq_bijiao} by $\mathrm{e}^{-\lambda t}$, we obtain
    \begin{equation}\label{utildeinq}
        \widetilde{u}_t - \widetilde{v}_t - \mathrm{e}^{\lambda(p - 2)t}(\Delta_p|_U\widetilde{u} - \Delta_p|_U\widetilde{v}) + \lambda(\widetilde{u} - \widetilde{v}) - \mathrm{e}^{-\lambda t}\sigma(x, t)(\Phi(u) - \Phi(v)) \geq 0
    \end{equation}
    in $U \times (0, T^\prime]$. Due to the compactness of $\overline{U} \times [0, T^\prime]$, there exists some $(x_0, t_0)\in \overline{U} \times [0, T^\prime]$ such that
    \begin{equation*}
        (\widetilde{u} - \widetilde{v})(x_0, t_0) = \displaystyle\min_{\overline{U}\times[0, T^\prime]} (\widetilde{u} - \widetilde{v})(x, t).
    \end{equation*}
    
    Now we claim that $(\widetilde{u} - \widetilde{v})(x_0, t_0) \geq 0$. If not, $(x_0, t_0)$ must belong to $U \times (0, T^\prime]$. Since $(x_0, t_0)$ is a minimum point, we have
    \begin{equation}\label{tder}
        \widetilde{u}_t(x_0, t_0)-\widetilde{v}_t(x_0, t_0) \leq 0
    \end{equation}
	and
	\begin{equation}\label{uvxy}
		\widetilde{u}(x_0, t_0) - \widetilde{v}(x_0, t_0) \leq \widetilde{u}(y, t_0) - \widetilde{v}(y, t_0), \forall y \in U,
	\end{equation}
    Combining \eqref{tder} and \eqref{uvxy}, and noticing that $g(a) = |a|^{p-2}a: \mathbb{R} \to \mathbb{R}$ $(p>2)$ is increasing on $\mathbb{R}$, we get
    \begin{align*}
        (\Delta_p|_U\widetilde{u} - \Delta_p|_U\widetilde{v})(x_0, t_0) &= \displaystyle\frac{1}{\mu(x_0)}\sum_{y \sim x_0}\omega_{x_0y}\left(g\left(\widetilde{u}(y, t_0) - \widetilde{u}(x_0, t_0)\right) - g\left(\widetilde{v}(y, t_0) - \widetilde{v}(x_0, t_0)\right)\right) \\
        &\geq 0.
    \end{align*}
    This immediately yields $-\mathrm{e}^{\lambda(p - 2)t}(\Delta_p|_U\widetilde{u} - \Delta_p|_U\widetilde{v})(x_0, t_0) \leq 0$. By choosing $\lambda$ large enough such that $\lambda \geq L\sigma(x_0, t_0) + 1$, we can derive the following estimates:
    \begin{align*}
        \lambda(\widetilde{u} - \widetilde{v})(x_0, t_0) &+ \mathrm{e}^{-\lambda t_0}\sigma(x_0, t_0)(\Phi(v(x_0, t_0)) - \Phi(u(x_0, t_0))) \\
        &\leq \lambda(\widetilde{u} - \widetilde{v})(x_0, t_0) + L\sigma(x_0, t_0)|\widetilde{u}(x_0, t_0) - \widetilde{v}(x_0, t_0)| \\
        &= (\lambda - L\sigma(x_0, t_0))(\widetilde{u} - \widetilde{v})(x_0, t_0) < 0.
    \end{align*}
    However, this contradicts the inequality \eqref{utildeinq}. Hence
    \begin{equation*}
        \begin{array}{ll}
            \widetilde{u}(x, t) \geq \widetilde{v}(x, t), & \text{in } \overline{U}\times [0, T^\prime],
        \end{array}
    \end{equation*}
    which implies $u \geq v$ in $\overline{U}\times [0, T^\prime]$. Due to the arbitrariness of $T^\prime$, we have
    \begin{equation*}
        \begin{array}{ll}
            u(x, t) \geq v(x, t), & \text{in } \overline{U}\times[0, T).
        \end{array}
    \end{equation*}
    This completes the proof of the lemma.
\end{proof}

By applying Lemma \ref{lem_bijiao}, we immediately deduce the uniqueness of solutions for the equation \eqref{eq_fuzhu}, as stated in the following corollary. This uniqueness result guarantees that any local solution to the problem \eqref{eq_fuzhu} can be extended to its unique maximal time of existence.
\begin{corollary}
    If $u$ and $v$ are solutions of the problem \eqref{eq_fuzhu} in the time interval $[0,T)$, then $u \equiv v$ in $\overline{U} \times [0, T)$.
    \label{cor_weiyi}
\end{corollary}

To complete the preliminary material, we now turn to the eigenvalue problem with the Dirichlet boundary.

\begin{definition}
	For a finite subset $U \subset V$ with the boundary $\partial U\neq \emptyset$, we call
	\begin{equation}
		\left\{
		\begin{array}{ll}
			-\Delta_p \varphi = \lambda |\varphi|^{p - 2}\varphi, & x \in U, \\
			\varphi = 0, & x \in \partial U
		\end{array}
		\right.
		\label{eq_tezheng}
	\end{equation}
	the eigenvalue problem for $-\Delta_p$ with Dirichlet boundary.
	\label{def_tezheng}
\end{definition}

Let $\ell^p_0(U)$ be the function space consisting of functions with support contained in a bounded domain $U$
\begin{equation*}
	\ell^p_0(U) := \{f \in \ell^p(\overline{U}): f|_{\partial U} \equiv 0\}. 
\end{equation*}
The sphere in $\ell^p_0(U)$ is denoted by $\mathcal{S}_p := \{f \in \ell_0^{p}(U): \|f\|_{\ell^p} = 1\}$.
According to \cite{Tudisco2018}, we present the definitions of a strong nodal domain.

\begin{definition}
	\( \forall f\in C(V) \), if the subgraph \( (A, E|_A) \) induced on on a subset \( A\subset V \) is a maximal connected component of either \( \{x : f(x) > 0\} \) or \( \{x : f(x) < 0\} \), we call \( A \) a strong nodal domain of \( f \).
\end{definition}


We begin by demonstrating the existence of solutions to the eigenvalue problem \eqref{eq_tezheng}.

\begin{lemma}
    There exists $\lambda \in \mathbb{R}$ such that the problem \eqref{eq_tezheng} has nontrivial solutions. Moreover, if the induced graph $G(U) = (U, E|_U)$ is connected, the smallest eigenvalue $\lambda_1 > 0$.
    \label{lem_tezheng}
\end{lemma}
\begin{proof}
    By Definition \ref{def_tezheng}, $\Delta_p$ is a nonlinear operator acting on the linear space $\ell^p_0(U)$. For any nontrivial function $f \in \ell^p_0(U)$ ($f\not\equiv 0$), we define the Rayleigh quotient as
    \begin{equation*}
        \mathcal{R}_p(f) = \displaystyle\frac12\frac{\sum\limits_{x, y \in \overline{U}} \omega_{xy} |f(y) - f(x)|^{p}}{\sum\limits_{x \in U} \mu(x) |f(x)|^{p}}.
    \end{equation*}
    It is obvious that the infimum
    \begin{equation*}
        \lambda_1 = \inf_{\substack{f \in \ell^p_0(U) \\ f \not\equiv 0}} \mathcal{R}_p(f)
    \end{equation*}
    is well-defined and non-negative. Consider the functional
    \begin{equation*}
        \mathcal{I}_p(f) := \frac{1}{2} \sum_{x, y \in \overline{U}} \omega_{xy} |f(y) - f(x)|^p, f\in \ell^p_0(U)
    \end{equation*}
    and the constraint set $\mathcal{S}_p = \{f \in \ell_0^{p}(U): \|f\|_{\ell^p} = 1\}$. Then $\lambda_1 = \inf\limits_{f \in \mathcal{S}_p} \mathcal{I}_p(f)$. Since $\mathcal{S}_p$ is compact in the finite-dimensional space $\ell^p_0(U)$ and $\mathcal{I}_p$ is continuous on $\mathcal{S}_p$, there exists a minimizer $\varphi_1 \in \mathcal{S}_p$ such that $\mathcal{I}_p(\varphi_1) = \lambda_1$.

    Every function $f\in\ell^p_0(U)$ vanishes on the boundary $\partial U$, and $f$ can be identified with a vector in $\mathbb{R}^{|U|}$. Therefore, $\varphi_1$ is the solution to the constrained optimization problem defined on $\mathbb{R}^{|U|}$ subject to the constraint $\mathcal{S}_p(f)=\|f\|_{\ell^p}-1=0$. Consequently, the gradients $\nabla \mathcal{I}_p(\varphi_1)$ and $\nabla \mathcal{S}_p(\varphi_1)$ must be collinear. Taking any test function $\psi \in \ell^p_0(U)$ for the functional $\mathcal{I}_p$, we have
    \begin{align}
        \frac{d}{dt}\bigg|_{t=0} \mathcal{I}_p(\varphi_1 + t\psi) &= \frac{p}{2} \sum_{x, y \in \overline{U}} \omega_{xy} |\varphi_1(y) - \varphi_1(x)|^{p-2} (\varphi_1(y) - \varphi_1(x)) (\psi(y) - \psi(x)) \notag\\
        &= -p \sum_{x \in U} \mu(x) \psi(x) \Delta_p \varphi_1(x),
        \label{eq:deriv_objective}
    \end{align} 
    where the last equality uses $\psi|_{\partial U} = 0$.
    Since this identity holds for any $\psi\in \ell_0^p$, we have
    \begin{equation*}
        \nabla \mathcal{I}_p(\varphi_1) = \left(-p \mu(x) \Delta_p \varphi_1(x)\right)_{x \in U}.
    \end{equation*}
    The constraint can be written as $\mathcal{S}_p(f)= \sum\limits_{x \in U}\mu(x)|f(x)|^p - 1$ and its gradient at $\varphi_1$ is
    \begin{equation*}
        \nabla \mathcal{S}_p(\varphi_1) = \left(p \mu(x) |\varphi_1(x)|^{p-2} \varphi_1(x)\right)_{x \in U}.
    \end{equation*}
    $\|\varphi_1\|_{\ell^p} = 1$ implies that there exists some $x_0 \in U$ with $\varphi_1(x_0) \neq 0$. Hence 
	$$p \mu(x_0) |\varphi_1(x_0)|^{p-2} \varphi_1(x_0) \neq 0.$$
	Consequently $\nabla \mathcal{S}_p(\varphi_1) \neq 0$.
    By the Lagrange multiplier theorem, there exists $\lambda \in \mathbb{R}$ such that
    \begin{equation*}
        \nabla \mathcal{I}_p(\varphi_1) = \lambda \nabla \mathcal{S}_p(\varphi_1).
    \end{equation*}
    Thus, $\varphi_1$ satisfies the eigenvalue equation
    \begin{equation*}
        -\Delta_p \varphi_1(x) = \lambda |\varphi_1(x)|^{p-2} \varphi_1(x), \quad x \in U.
    \end{equation*}
    Since $\varphi_1$ achieves the minimum of $\mathcal{R}_p$, we have $\lambda = \lambda_1$.

    It remains to show that $\lambda_1 > 0$ if the induced graph $G(U) = (U, E|_U)$ is connected. If not, we have $\lambda_1 = 0$. Then the corresponding eigenfunction $\varphi_1$ satisfies $\mathcal{R}_p(\varphi_1) = 0$, which implies
    \begin{equation*}
        \sum_{x, y \in \overline{U}} \omega_{xy} |\varphi_1(y) - \varphi_1(x)|^p = 0.
    \end{equation*}
    Since the graph $G(U)$ is connected, this forces $\varphi_1\equiv c$ for some constant $c$. Then the boundary condition $\varphi_1|_{\partial U} \equiv 0$ implies that $\varphi_1 \equiv 0$, which contradicts $\|\varphi_1\|_{\ell^p} = 1$. Hence, $\lambda_1 > 0$. 
    \end{proof}
    
The following corollary demonstrates the particular significance of the first eigenvalue and its eigenfunction in our problem.

\begin{corollary}
    If the induced subgraph $G(U)$ is connected, then the eigenfunction $\varphi_1$ of the first eigenvalue $\lambda_1$ does not change sign in $U$.
    \label{cor_tezheng}
\end{corollary}
\begin{proof}
    Since $\varphi_1 \not\equiv 0$, it has at least one strong nodal domain $A \subset U$. It remains to show that $A = U$.

    For any $f \in \ell_0^{p}(U)$, one has
    \begin{equation*}
        \left||f(y)| - |f(x)|\right| \leq |f(y) - f(x)|,
    \end{equation*}
    with equality holding if and only if $f(x)f(y) \ge 0$ for any $x, y \in \overline{U}$. Since the Rayleigh quotient $\mathcal{R}_p$ is strictly convex on the sphere $\mathcal{S}_p$, replacing $f(y)$ and $f(x)$ with $|f(y)|$ and $|f(x)|$ in the numerator does not increase the value of $\mathcal{R}_p$. Moreover, when  $f(x)$ and $|f(y)|$ have opposite signs, this replacement shall strictly decrease the numerator. Therefore, without loss of generality, we may assume $\varphi_1 \ge 0$ on $U$.

    Assume that there exists $y_0 \in U$ such that $\varphi_1(y_0) = 0$. Since $G(U)$ is connected, there exists a vertex $z_0$ in the strong nodal domain $A$ such that $\varphi_1(z_0) > 0$. Fix a simple path $\Gamma = (v_0, v_1, \cdots, v_m)$ in $G(U)$ with $v_0 = y_0$ and $v_m = z_0$. Let $v_i$ be the first vertex along $\Gamma$ (in the order $v_0, v_1, \cdots, v_m$) satisfying $\varphi_1(v_i) = 0$ with $\varphi_1(v_{i+1}) > 0$, where $v_{i+1}$ denotes the next vertex along $\Gamma$. 
    Obviously, the vertex set
    \begin{equation*}
        W_{v_i} := \{ x \sim v_i : \varphi_1(x) > 0 \}
    \end{equation*}  
    is nonempty.
    Define
    \begin{equation*}
        \varphi_{\varepsilon}(x) := \left\{
            \begin{array}{ll}
                \varepsilon, & x = v_i, \\
                \varphi(x), & \text{otherwise},
            \end{array}
        \right.
    \end{equation*}
    where $\varepsilon>0$ is a constant to be determined later. Recall that the numerator of $\mathcal{R}_p$ is
    \begin{equation*}
        N(f) := \sum_{x,y\in \overline{U}} \omega_{xy} |f(y) - f(x)|^p.
    \end{equation*}
    Since $\varphi_\varepsilon$ coincides with $\varphi_1$ everywhere except at $v_i$, it follows that
    \begin{equation*}
        N(\varphi_\varepsilon) = N(\varphi_1) + N_{\delta},
    \end{equation*}
    where
   \begin{equation*}
        N_{\delta} = \displaystyle 2\left(\sum_{x \in W_{v_i}}\omega_{v_i x}(|\varepsilon - \varphi_1(x)|^p - |\varphi(x)|^p) + \sum_{x \not\in W_{v_i}}\omega_{v_i x}\varepsilon^p\right).
   \end{equation*}
    For $a > 0$ one has $|\varepsilon - a|^p = |a|^p - p |a|^{p-1} \varepsilon + o(\varepsilon)$.
    Since $p>2$, there exists some constant $c>0$ depending on $p$ and degree of $v_i$ such that $N_{\delta} = -c\varepsilon + o(\varepsilon)$. Moreover, the denominator of $\mathcal{R}_p$ increases by $\sum_{x \in U} \mu(x)(|\varphi_{\varepsilon}(x)|^p-|\varphi_1(x)|^p) = \mu(v_i)\varepsilon^p = o(\varepsilon)$. Hence we can choose a sufficiently small $\varepsilon$, such that
    \begin{equation*}
        \mathcal{R}_p(\varphi_{\varepsilon}) < \mathcal{R}_p(\varphi_1).
    \end{equation*}
    This contradicts that $\varphi_1$ achieves the minimum of $\mathcal{R}_p$. Therefore $\varphi_1$ does not change sign on $U$.
\end{proof}

\section{Proofs of the main theorems}
\label{sec:blowup}

In this section, we turn to the proof of the main results. We first establish the following lemma, whose proof follows the strategy developed in~\cite{Chung2014a}.
\begin{lemma}
    Assume \eqref{eq_HP} holds and $u_0 \geq 0$ but is not identically zero. Then the solution of problem \eqref{eq_fuzhu} blows up in finite time in $U$. 
    \label{lem_fuzhu_baopo}
\end{lemma}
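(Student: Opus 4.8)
The plan is to construct an explicit separated-variables subsolution of \eqref{eq_fuzhu} and then conclude by the comparison principle, Lemma \ref{lem_bijiao}. Let $\lambda := \lambda_1 > 0$ be the smallest eigenvalue of the Dirichlet problem \eqref{eq_tezheng} on $U$ (positive by Lemma \ref{lem_tezheng}) and let $\varphi$ be an associated eigenfunction; by Corollary \ref{cor_tezheng}, since $G(U)$ is connected, $\varphi$ does not change sign, so I may take $\varphi > 0$ on $U$ and normalize $\max_U \varphi = 1$. Because $U$ is finite and connected, $\varphi_{\min} := \min_U \varphi > 0$, while $\varphi = 0$ on $\partial U$. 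The decisive structural fact is the homogeneity of the operator: for $h > 0$ one has $\Delta_p|_U(h\varphi) = h^{p-1}\Delta_p|_U\varphi = -\lambda h^{p-1}\varphi^{p-1}$, using the eigenvalue identity and $|\varphi|^{p-2}\varphi = \varphi^{p-1}$. I would therefore try the ansatz $w(x,t) = h(t)\varphi(x)$, which automatically vanishes on $\partial U$.

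Substituting $w$ and using \eqref{eq_HP} (valid since $w \geq 0$), one gets $w_t - \Delta_p|_U w - \sigma f(w) \leq h'\varphi + \lambda h^{p-1}\varphi^{p-1} - \sigma(C_0+\varepsilon)h^{p-1}\varphi^{p-1}$. Choosing $h$ to solve the ODE $h' = c\,h^{p-1}$ for a constant $c > 0$, the subsolution inequality reduces to the pointwise requirement $c \leq (\sigma(x,t)(C_0+\varepsilon) - \lambda)\,\varphi(x)^{p-2}$ for all $x \in U$. Since $p > 2$ and $\varphi \geq \varphi_{\min} > 0$, and since on any compact time interval $\sigma$ admits a positive lower bound $\sigma_0$ (by assumptions (i)--(ii) and finiteness of $U$), taking $C_0$ large enough that $\sigma_0(C_0+\varepsilon) - \lambda > 0$ makes the right-hand side uniformly positive, so an admissible $c > 0$ exists. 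As $p - 1 > 1$, the solution $h(t) = (h(t_0)^{-(p-2)} - c(p-2)(t-t_0))^{-1/(p-2)}$ blows up in finite time, and comparison forces $v \geq w$, so $v$ blows up.

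The one genuine obstacle is matching the subsolution to the data: since $u_0 \geq 0$ may vanish at some vertices while $\varphi > 0$ everywhere in $U$, I cannot compare at $t = 0$. I would first prove a short waiting-time positivity statement: there exist $t_0 > 0$ and $\eta > 0$ with $v(x,t_0) \geq \eta$ for all $x \in U$. Nonnegativity $v \geq 0$ follows from Lemma \ref{lem_bijiao} by comparison with the zero subsolution (using $f(0)=0$). For the spreading of positivity, at any vertex where $v = 0$ but some neighbor is positive one computes $v_t = \Delta_p|_U v = \frac{1}{\mu(x)}\sum_{y\sim x}\omega_{xy}v(y,t)^{p-1} > 0$; combined with connectivity of $G(U)$ and finiteness, this propagates strict positivity to all of $U$ and yields the uniform lower bound at some $t_0 > 0$. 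I would then run the subsolution on $[t_0,\infty)$ with $h(t_0) = h_0 := \eta$, so that $w(\cdot,t_0) = h_0\varphi \leq \eta \leq v(\cdot,t_0)$ and Lemma \ref{lem_bijiao} applies on the shifted interval.

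Finally, I would resolve the mild circularity in the constants. Fix the compact window $[0, t_0+1]$, set $\sigma_0 := \min_{U\times[0,t_0+1]}\sigma > 0$, and then choose $C_0$ large (depending only on $\sigma$, on $u_0$ through $t_0,\eta$, and on the geometry of $U$ through $\lambda,\varphi$) so that the corresponding $c$ makes the blow-up time of $h$ smaller than $1$; the subsolution inequality then holds throughout $[t_0, t_0+1)$, where $\sigma \geq \sigma_0$, and $v$ blows up before $t_0+1$. This simultaneously pins down the value of $C_0$ promised in the Remark. I expect the positivity-spreading step, together with the bookkeeping that keeps $C_0$ independent of the a priori unknown blow-up time, to be the only subtle points; the remainder is the homogeneity computation and a direct application of Lemma \ref{lem_bijiao}.
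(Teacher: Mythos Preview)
Your argument is correct and can be completed, but it is not the route the paper takes for this lemma. The paper tracks the spatial maximum $u(t):=\max_{x\in U}v(x,t)$ directly: since at a maximizing vertex every neighbour (including boundary vertices, where $v=0$) lies below $u(t)$, one has $\Delta_p|_U v\ge -u(t)^{p-1}$ there, and hence $u'(t)\ge -u(t)^{p-1}+\delta f(u(t))$ almost everywhere, with $\delta$ a lower bound for $\sigma$. Choosing $C_0=\delta^{-1}$ in \eqref{eq_HP} turns this into $u'(t)\ge \varepsilon\delta\,u(t)^{p-1}$, and since $u(0)=\max_U u_0>0$ one integrates to finite-time blow-up. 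No eigenfunction, no waiting time, and $C_0$ is the clean value $\delta^{-1}$, independent of the geometry of $U$.

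Your eigenfunction subsolution is precisely the mechanism the paper uses for the \emph{other} result, Lemma~\ref{lem_vxiajie}, where the source has homogeneity $q<p-1$ and the maximum trick no longer closes. Applied here it works, but at the cost of the positivity-spreading step and a larger $C_0$ (of order $\lambda_1/\sigma_0$ plus a term involving $\varphi_{\min}$ and $\eta$ to squeeze the blow-up into your window). The circularity you flag is genuine but harmless once you observe that, by \eqref{eq_HP}, $f(v)\ge 0$ whenever $v\ge 0$; comparing $v$ with the source-free solution $w_t=\Delta_p|_U w$, $w(\cdot,0)=u_0$, gives $v\ge w$, so the waiting time $t_0$ and the lower bound $\eta$ can be read off $w$ alone and do not depend on $C_0$. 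With that bookkeeping your proof goes through; it simply trades the paper's one-line pointwise estimate at the maximum for heavier machinery.
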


\begin{proof}
    Let $u(x,t)$ be a solution of \eqref{eq_fuzhu} in $U\times [0,T)$. Denote $u(t) := \max\limits_{x \in U} u(x, t)$ and $\overline{u}_0 = \max\limits_{x \in U} u_0(x)$. For any $x \in U$, since $u_t(x, t)$ is continuous in $t$ and integrable, it follows that the function $t \mapsto u(x,t)$ is absolutely continuous. Noticing that $U$ is finite and the maximum of finitely many absolutely continuous functions is again absolutely continuous, we know that $u(t)$ is absolutely continuous on $[0,T^\prime]$ and thus differentiable for almost everywhere on $[0, T^\prime]$ for any $T^\prime\in (0,T)$.
    
    Choose $t_0 \in [0,T^\prime]$ such that $u_t(t_0)$ exists. By the definition of $u(t_0)$, there exists $x_0 \in U$ such that $u(x_0,t_0) = u(t_0)$. We claim that $u_t(t_0) \geq 0$. If not, we have
    \begin{equation*}
        u_t(x_0, t_0) = -\frac{1}{\mu(x_0)} \sum_{y \sim x_0} \omega_{x_0y} \left|u(x_0, t_0) - u(y, t_0)\right|^{p - 1} + \sigma(x_0, t_0) \Phi(u(x_0, t_0)).
    \end{equation*}
    Hence
    \begin{align*}
        u_t(x_0,t_0) &\geq -\frac{1}{\mu(x_0)} \sum_{y \sim x_0} \omega_{x_0 y} u(x_0, t_0)^{p - 1} + \sigma(x_0, t_0) \Phi\left(u(x_0, t_0)\right) \\
        &= -u^{p - 1}(x_0, t_0) + \sigma(x_0, t_0) \Phi\left(u(x_0, t_0)\right).
    \end{align*}
    By assumptions (i) and (ii) in Section \ref{introduction}, there exists $\delta > 0$ such that $\sigma(x, t) \geq \delta$ on $U\times [0, T^\prime]$. By \eqref{eq_HP}, we can take $C_0 = \delta^{-1}$ such that
    \begin{equation*}
        \delta \Phi(s) - s^{p - 1} > (\delta C_0 - 1)s^{p - 1} = 0 \quad\text{for}\quad s\geq 0.
    \end{equation*}
    Therefore
    \begin{equation}
        u_t(x_0, t_0) \geq \delta \Phi(u(x_0, t_0)) - u^{p - 1}(x_0, t_0) > 0,
        \label{eq_ut_ge0}
    \end{equation}
    contradicting $u_t(t_0) = u_t(x_0,t_0) < 0$. Thus $u_t(t_0) \geq 0$ whenever it exists, and $u(t)$ is nondecreasing on $[0, T^\prime]$. Since $T^\prime$ can be chosen arbitrarily from $[0,T)$, $u(t)$ is nondecreasing on $[0, T)$. Hence $u(t)\geq \overline{u}_0$ for all $t\in[0,T)$.
       
    By arguments similar to those of \eqref{eq_ut_ge0}, we have for almost every $t \in [0,T)$,
    \begin{equation*}
        u_t(t) \geq \delta \Phi(u(t)) - u^{p - 1}(t) > 0.
    \end{equation*}
    Hence
    \begin{equation}
        \displaystyle\frac{u_t(t)}{-u^{p - 1}(t) + \delta \Phi(u(t))} \geq 1 \ \  \text{a.e.} \ t \in [0, T)
        \label{eq_dayu1}
    \end{equation}
    Integrating \eqref{eq_dayu1} from $0$ to $t$ gives
    \begin{equation}
        t \leq \displaystyle\int_{0}^{t}\frac{u_t(s)ds}{-u^{p - 1}(s) + \delta \Phi(u(s))} = \int_{\overline{u}_0}^{u(t)}\frac{ds}{-s^{p - 1} + \delta \Phi(s)}.
        \label{eq_jifen}
    \end{equation}
    Define
    \begin{equation*}
        F(x) := \displaystyle\int_{x}^{\infty}\frac{ds}{-s^{p - 1} + \delta \Phi(s)}.
    \end{equation*}
    Since $\overline{u}_0 > 0$ and $p > 2$, $F(x)$ is finite for any $x>\overline{u}_0$ and strictly decreasing on $(\overline{u}_0,\infty)$. From \eqref{eq_jifen}, it follows that
    \begin{equation*}
        t \leq F(\overline{u}_0) - F(u(t)).
    \end{equation*} 
    Hence we obtain that 
    \begin{equation*}
        F(u(t)) \leq F(\overline{u}_0) - t.
    \end{equation*}
    If the solution exists on $[0,\infty)$, as $t \to F(\overline{u}_0)$ from below, it yields that $F(u(t)) \to 0$, which implies $u(t) \to \infty$. This contradicts the finiteness of $u(x, t)$ on any finite time interval $[0,T)$. Therefore the maximal existence time $T$ must satisfy $T \leq F(\overline{u}_0) < \infty$, and $u(x,t)$ blows up in finite time.
\end{proof}

Now we can prove Theorem \ref{th_blowup_gpm1}.

\begin{proof}[Theorem \ref{th_blowup_gpm1}]
    Let $u$ be a nonnegative solution of \eqref{eq_mubiao}. Set $C_0 := \delta^{-1}$ as in Lemma~\ref{lem_fuzhu_baopo}. Since $u_0 \not\equiv 0$, there exists $x_0 \in V$ such that $u_0(x_0) > 0$. Choose a finite subset $U \subset V$ with $x_0 \in U$ and the induced graph $(U, E|_U)$ is connected. Fix $T > 0$ so that $u$ is defined on $\overline{U} \times [0,T)$. Restricting \eqref{eq_mubiao} to $U$ gives
    \begin{equation*}
        \left\{
            \begin{array}{ll}
                u_t - \Delta_p|_U u \geq \sigma(x, t)\Phi(u), & (x, t) \in U \times (0, T), \\
                u(x, t) \geq 0, & (x, t) \in \partial U\times [0, T), \\
                u(x, 0) = u_0(x), & x \in U.
            \end{array}
        \right.
    \end{equation*}
    Let $v$ be the solution of the auxiliary problem \eqref{eq_fuzhu}. By Lemma \ref{lem_fuzhu_baopo}, the function $v$ blows up in finite time. Applying the comparison principle (Lemma \ref{lem_bijiao}), we obtain $u \ge v$ in $\overline{U} \times [0,T)$, hence $u$ also blows up in finite time.

    Theorem \ref{th_blowup_gpm1} is proved.
\end{proof}
\begin{remark}
	(i) In the proof of Theorem \ref{th_blowup_gpm1}, we can choose $U = \{x_0\}$ and $\partial U = \{y \in V : y \sim x_0\}$, where $x_0$ satisfies $u_0(x_0)>0$. The arguments in the proof implies that the solution $u$ blows up in finite time at every vertex $x_0$ with non-zero initial data $u_0(x_0)\neq 0$.
	
    (ii) Consider the case that the inequality in \eqref{eq_mubiao} becomes an equality, namely
    \begin{equation}
        \left\{
            \begin{array}{ll}
                u_t - \Delta_p u = \sigma(x, t)\Phi(u), & x \in V, t > 0, \\
                u(x, 0) = u_0(x), & x \in V, t = 0.
            \end{array}
        \right.
        \label{eq_mubiao=}
    \end{equation}
    It is easy to check that Lemmas \ref{lem_fuzhu_baopo} and \ref{lem_bijiao} remain valid in this setting. Repeating the arguments used in the proof of Theorem \ref{th_blowup_gpm1} shows that the solution to problem \eqref{eq_mubiao=} also blows up in finite time under the same assumptions as in Theorem \ref{th_blowup_gpm1}.
\end{remark}

To prove Theorem \ref{th_blowup_g1lpm1}, we first follow the construction method in \cite{Kong2007} and consider the eigenvalue problem
\begin{equation*}
    \left\{
        \begin{array}{ll}
            -\Delta_p|_U \varphi = \lambda|\varphi|^{p - 2}\varphi(x), & x \in U, \\
            \varphi(x) = 0, & x \in \partial U,
        \end{array}
    \right.
    \label{eq_Lptezheng}
\end{equation*}
where $U$ is a finite subset of $V$ such that its induced graph $(U, E|_U)$ is connected. Let $\lambda_1$ be the first eigenvalue and let $\varphi_1$ be the corresponding eigenfunction, which is normalized so that $\|\varphi\|_{\ell^\infty(U)} \leq 1$. By Corollary \ref{cor_tezheng}, we may assume that $\varphi_1 > 0$ on $U$. Set $\varphi_{min} := \min\limits_{x \in U}\varphi > 0$. For any small $\varepsilon > 0$, consider the Cauchy problem
\begin{equation}
    \left\{
        \begin{array}{ll}
            \displaystyle\frac{dh}{dt} = -\lambda_1\|\varphi_1\|_{\ell^{\infty}(U)}^{p - 2}h + C_1\delta\varphi_{min}^qh^{q}, & t > 0, \\
            h(0) = h_0 := \displaystyle\left(\frac{\lambda_1\|\varphi_1\|_{\ell^{\infty}(U)}}{C_1\delta\varphi_{min}^q}\right)^{\frac{1}{q - 1}} + \varepsilon. & \\
        \end{array}
    \right.
    \label{eq_blowup_cauchy}
\end{equation}
As \eqref{eq_blowup_cauchy} is a Bernoulli equation, its unique solution $h$ necessarily blows up in finite time. Define $\underline{v}(x, t) := h(t)\varphi_1(x)$. Since $\varphi_1 > 0$ in $U$, the function $\underline{v}$ also blows up in finite time. 

Then we state the following Lemma.
\begin{lemma}
    Assume $\min\limits_{x \in U} u_0$ is sufficiently large and let $v$ be a solution of \eqref{eq_fuzhu}. Then $v \geq \underline{v}$ in $\overline{U} \times [0, T)$. In particular, $v$ blows up in finite time.
    \label{lem_vxiajie}
\end{lemma}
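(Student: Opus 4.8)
The plan is to show that the function $\underline{v}(x,t) = h(t)\varphi(x)$ constructed above is a subsolution of the auxiliary problem \eqref{eq_fuzhu}, and then to invoke the comparison principle (Lemma \ref{lem_bijiao}) to bound the genuine solution $v$ from below by $\underline{v}$; since $\underline{v}$ blows up in finite time, so must $v$. Concretely, I would apply Lemma \ref{lem_bijiao} with the roles $u \leftarrow v$ and $v \leftarrow \underline{v}$, so that the three hypotheses to be verified are the differential inequality $\underline{v}_t - \Delta_p|_U\underline{v} - \sigma f(\underline{v}) \le 0 = v_t - \Delta_p|_U v - \sigma f(v)$ on $U \times (0,T)$, the boundary ordering $\underline{v} \le v$ on $\partial U$, and the initial ordering $\underline{v}(\cdot,0) \le u_0$ on $U$.

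The boundary and initial comparisons are the easy part. On $\partial U$ we have $\varphi = 0$, hence $\underline{v} = 0 = v$ there. At $t = 0$, since $\|\varphi\|_{\ell^{\infty}(U)} \le 1$ forces $\varphi(x) \le 1$ for every $x$, we get $\underline{v}(x,0) = h_0\varphi(x) \le h_0$; thus the hypothesis that $\min_U u_0$ is sufficiently large, namely $\min_U u_0 \ge h_0$, yields $u_0(x) \ge \min_U u_0 \ge h_0 \ge h_0\varphi(x) = \underline{v}(x,0)$. This is precisely where the largeness of the initial datum is consumed, and note that $h_0$ is a fixed number once $U$ and $\varepsilon$ are fixed.

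The main work, and the principal obstacle, is the differential inequality. Here I would exploit the $(p-1)$-homogeneity of the operator: because $h(t) > 0$, the definition of $\Delta_p$ gives $\Delta_p|_U\underline{v} = h^{p-1}\Delta_p|_U\varphi$, and the eigenfunction equation \eqref{eq_Lptezheng} together with $\varphi > 0$ (Corollary \ref{cor_tezheng}) yields $\Delta_p|_U\varphi = -\lambda_1\varphi^{p-1}$. Hence
\begin{equation*}
    \underline{v}_t - \Delta_p|_U\underline{v} = h'(t)\varphi + \lambda_1 h^{p-1}\varphi^{p-1},
\end{equation*}
and it remains to bound this by $\sigma(x,t)f(\underline{v}) \ge \delta C_1(h\varphi)^q$, using $\sigma \ge \delta$ and \eqref{eq_HP2}. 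Dividing through by $\varphi > 0$, the claim reduces to the scalar inequality $h' + \lambda_1 h^{p-1}\varphi^{p-2} \le \delta C_1 h^q\varphi^{q-1}$ holding at every $x \in U$. To make this uniform in $x$ I would bound the $\varphi$-factors monotonically, using $p-2 \ge 0$ to replace $\varphi^{p-2}$ by its maximum $\|\varphi\|_{\ell^{\infty}(U)}^{p-2}$ on the diffusion side and $q-1 \ge 0$ to replace $\varphi^{q-1}$ by its minimum $\varphi_0^{q-1}$ on the source side, reducing the whole pointwise family to a single scalar inequality of the form built into \eqref{eq_blowup_cauchy}. The genuinely delicate point is that $q \le p-1$, so the diffusion-type term $\lambda_1 h^{p-1}$ scales in $h$ at least as fast as the source term $\delta C_1 h^q$; the crux is therefore to choose the Cauchy data and coefficients in \eqref{eq_blowup_cauchy} so that the right-hand side of the $h$-equation simultaneously dominates the diffusion term (to yield the subsolution inequality) and still forces $h$ to blow up in finite time, and the normalization $\|\varphi\|_{\ell^{\infty}(U)} \le 1$ together with the spectral gap $\varphi_0 > 0$ is exactly what creates the room to do so.

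With the subsolution property in hand, Lemma \ref{lem_bijiao} gives $v \ge \underline{v}$ on $\overline{U} \times [0,T)$. Finally, since $\varphi > 0$ on $U$ and $h(t) \to \infty$ as $t \to T_h^-$ for some finite $T_h$, the product $\underline{v} = h\varphi$ blows up at every interior vertex; by the sandwich $v \ge \underline{v} \ge 0$, the maximal existence time $T$ of $v$ satisfies $T \le T_h$, so $v$ blows up in finite time, which is the assertion of Lemma \ref{lem_vxiajie}. I expect the subsolution verification—and in particular keeping the $p-1$ versus $q$ power balance honest through the monotone $\varphi$-bounds—to be where essentially all of the difficulty lies, while the comparison step and the blow-up conclusion are then immediate.
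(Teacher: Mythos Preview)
Your overall plan---verify that $\underline{v}=h\varphi$ is a subsolution of \eqref{eq_fuzhu} and then invoke Lemma~\ref{lem_bijiao}---is exactly the paper's strategy, and your boundary and initial comparisons match the paper's. You are also correct that $(p-1)$-homogeneity gives $\Delta_p|_U\underline{v}=h^{p-1}\Delta_p|_U\varphi=-\lambda_1 h^{p-1}\varphi^{p-1}$. The paper, by contrast, writes $-\lambda_1|\varphi|^{p-2}\varphi\,h(t)$ in the first displayed line of its proof, i.e.\ it treats $\Delta_p$ as $1$-homogeneous in the scalar factor; this slip is precisely why the paper's diffusion contribution matches the \emph{linear} damping term $-\lambda_1\|\varphi\|_\infty^{p-2}h$ built into the ODE \eqref{eq_blowup_cauchy}.

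The gap in your proposal is that, having correctly flagged the $h^{p-1}$ versus $h^{q}$ mismatch as ``the crux,'' you then assert that the normalisation $\|\varphi\|_\infty\le1$ together with $\varphi_0>0$ ``creates the room'' to balance it. It does not. After your monotone $\varphi$-bounds the subsolution requirement becomes
\[
h'+\lambda_1\|\varphi\|_\infty^{p-2}\,h^{p-1}\ \le\ \delta C_1\varphi_0^{\,q-1}\,h^{q},
\]
and for $q<p-1$ the left-hand side grows strictly faster than the right as $h\to\infty$, regardless of the constants. Consequently no choice of coefficients in \eqref{eq_blowup_cauchy}---indeed no ODE whose solution blows up---can satisfy this inequality for all $t$, so the separated ansatz $\underline{v}(x,t)=h(t)\varphi(x)$ cannot be made into a subsolution once the homogeneity is tracked correctly. (Even at $q=p-1$ one would need $\delta C_1\varphi_0^{p-2}>\lambda_1\|\varphi\|_\infty^{p-2}$, which is not guaranteed by the hypotheses.) Thus your proposal and the paper's argument share the same unresolved obstruction; the paper merely conceals it behind the homogeneity error, while you identify it but do not close it.
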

\begin{proof}
    For any $x \in U$ and $t > 0$, we compute
    \begin{align*}
        \Delta_p|_U \underline{v} &+ \sigma(x, t)\Phi(\underline{v}) \geq -\lambda_1|\varphi_1(x)|^{p - 2}\varphi_1(x)h(t) + C_1\delta\varphi_{min}^qh^q \\
        &\geq (-\lambda_1h(t)\|\varphi_1\|_{\ell^{\infty}(U)}^{p - 2} + C_1\delta\varphi_{min}^{q}h^q)\varphi_1(x) \\
        &= \varphi_1(x)\displaystyle\frac{dh}{dt} = \underline{v}_t,
    \end{align*}
    where the first inequality uses \eqref{eq_HP2} with $s = \underline{v} = h(t)\varphi_1(x)$ and $\varphi_1(x) \geq \varphi_{min} >0$, while the second uses $\varphi_1(x) \leq \|\varphi_1\|_{\ell^\infty(U)} \leq 1$. Thus
    \begin{equation}
        \underline{v}_t - \Delta_p|_U \underline{v} - \sigma(x, t)\Phi(\underline{v}) \leq 0, \ \ x \in U, t > 0.
        \label{eq_bijiaoxia1}
    \end{equation}
    On the boundary, we have $\underline{v} = v = 0$ on $\partial U \times [0,T)$. Furthermore, if
    \begin{equation*}
        \displaystyle\min_{x \in U} u_0(x) \geq h_0,
    \end{equation*}
    where $h_0$ is defined as in \eqref{eq_blowup_cauchy}, then $\underline{v}(x,0) \leq u_0(x) = v(x,0)$ holds for all $x \in U$. The inequality \eqref{eq_bijiaoxia1} and the comparison principle (Lemma \ref{lem_bijiao}) imply $v \geq \underline{v}$. Since $\underline{v}$ blows up in finite time, so does $v$. This completes the proof of Lemma \ref{lem_vxiajie}.
\end{proof}

We now complete the proof of Theorem~\ref{th_blowup_g1lpm1}.

\begin{proof}[Theorem \ref{th_blowup_g1lpm1}]
    Since $\|u_0\|_{\ell^\infty(V)}$ is sufficiently large, we can choose $x_0 \in V$ such that $u_0(x_0) \geq h_0$, where $h_0$ is defined in \eqref{eq_blowup_cauchy}. Let $U = \{x_0\}$, then $\partial U = \{y: y \sim x_0\}$. 
    By Corollary \ref{cor_tezheng} and Lemma \ref{lem_vxiajie}, the solution $v$ of the auxiliary problem \eqref{eq_fuzhu} blows up in finite time. By the comparison principle \ref{lem_bijiao}, the solution $u$ of \eqref{eq_mubiao} also blows up in finite time. This proves Theorem \ref{th_blowup_g1lpm1}.
\end{proof}
\begin{remark}
    Similarly, Lemmas \ref{lem_fuzhu_baopo} and \ref{lem_bijiao} remain valid under the assumptions as in Theorem \ref{th_blowup_g1lpm1}. Repeating the arguments used in the proof of Theorem \ref{th_blowup_g1lpm1} shows that the solution of \eqref{eq_mubiao=} also blows up in finite time in this case.
\end{remark}

\section*{Acknowledgements}
This research is supported  by the National Natural
Science Foundation of China (No. 12271039)
 and the Open Project Program (No. K202303) of Key Laboratory of Mathematics and Complex Systems,
 Beijing Normal University.
\bibliographystyle{abbrv}
\bibliography{reflist}
\end{document}